\documentclass[a4paper,oneside, reqno]{amsart}
\usepackage{mathrsfs}
\usepackage{amsfonts}
\usepackage{amssymb}
\usepackage{amsxtra}
\usepackage{dsfont}
\usepackage{amsthm}

\usepackage[english]{babel}

\usepackage{color}

\usepackage{enumitem}
\usepackage{hyperref}
\usepackage{stmaryrd} 

\newtheorem{theorem}[equation]{Theorem}

\newtheorem{lemma}[equation]{Lemma}
\theoremstyle{definition}

\numberwithin{equation}{section}

\theoremstyle{remark}

\newcommand{\RR}{\mathbb{R}}
\newcommand{\ZZ}{\mathbb{Z}}
\newcommand{\TT}{\mathbb{T}}

\newcommand{\QQ}{\mathbb{Q}}

\newcommand{\calF}{\mathcal{F}}

\newcommand{\ind}[1]{{\mathds{1}_{{#1}}}}

\def\R{{\mathbb{R}}}
\def\T{{\mathbb{T}}}
\def\C{{\mathbb{C}}}
\def\N{{\mathbb{N}}}
\def\Z{{\mathbb{Z}}}

\def\G{{\mathbb{G}}}

\DeclareMathOperator{\supp}{supp}

\title[Sharp bounds in Ionescu--Wainger multiplier theorem for canonical fractions]
{Sharp bounds in Ionescu--Wainger multiplier theorem for canonical fractions}

\author{Wojciech S{\l}omian}
\address{Wojciech S{\l}omian (\textnormal{wojciech.slomian@uga.edu})
\newline Department of Mathematics, University of Georgia, Athens, GA 30602, USA \& Instytut \linebreak Matematyczny, Uniwersytet Wroc{\l}awski, Plac Grunwaldzki 2/4, 50-384 Wro\-c{\l}aw, Poland}

\subjclass[2020]{42A45, 11P55}
\keywords{Ionescu--Wainger multiplier theorem, discrete analogues, circle method}
\thanks{The author was supported by AMS--Simons Travel Grant No. 00008780.}

\begin{document}
\begin{abstract}
In this short note, we show that the loss of order
$N^{c_p/\log\log N}$ in the version of the Ionescu--Wainger
multiplier theorem for canonical fractions, recently established by Kosz, Mirek,
S{\l}omian, and Zhang in \cite{KMSZ}, is optimal.
\end{abstract}
\maketitle
\section{Introduction}

Since the work of Bourgain \cite{B3} on ergodic averages along squares
and of Stein and Wainger \cite{SW1} on discrete Radon transforms,
the study of discrete analogues has grown into a well-established
and rapidly developing branch of harmonic analysis. Its deep
connections with number theory make the subject both intriguing
and challenging. The absence of arbitrarily small scales on $\Z$,
together with the limited applicability of tools such as
differentiation and changes of variables, makes it difficult
to adapt methods from the corresponding continuous setting on $\R$.

One of the most important tools in the study of discrete analogues is the Ionescu--Wainger multiplier
theorem, which, roughly speaking, provides an $\ell^p$ transference
principle for major-arc multipliers of the form
\begin{equation*}
    \Delta_{\varepsilon_N,\mathcal{A}}[\mathfrak{m}](\xi)
    := \sum_{a/q\in\mathcal{A}}
    \mathfrak{m}\!\left(\varepsilon_N^{-1}\left(\xi-a/q\right)\right),
    \qquad \xi\in\TT,
\end{equation*}
where $\mathfrak{m}$ is supported in $[-1/2,1/2]$,
$\varepsilon_N>0$ is sufficiently small, and
$\mathcal{A}\subset\QQ\cap[-1/2,1/2]$.

The Ionescu--Wainger multiplier theorem was first established
by Ionescu and Wainger \cite[Theorem 1.5]{IW} in their groundbreaking
work on discrete singular Radon operators. In their construction,
the major arcs were centered at what are now known as
Ionescu--Wainger fractions.

Since then, the theorem has become a widely used tool in discrete
harmonic analysis. It has also become an object of study in its own
right; see, for instance, the work of Tao \cite{TaoIW} and of Mirek,
Stein, and Zorin-Kranich \cite{MSZ3}. A more recent breakthrough came
in the work of Kosz, Mirek, Peluse, Wan, and Wright \cite{KMPWW}
on multilinear polynomial ergodic averages. These authors established
an Ionescu--Wainger multiplier theorem for the sets of
\textit{canonical fractions}, defined for each integer $N\geq 1$ by
\begin{equation*}
    \mathcal{R}_{\leq N}:=\left\{
    \frac{a}{q}\in\mathbb{Q}/\mathbb{Z}: a\in\mathbb{Z},\ q\in\{1,\ldots,N\},\ (a,q)=1\right\}.
\end{equation*}
In \cite[Theorem 3.3]{KMPWW}, the authors established the following
estimate:
\begin{equation*}
   \bigl\|
       \mathcal{F}_{\Z}^{-1}
       \bigl(
           \Delta_{\varepsilon_N,\mathcal{R}_{\leq N}}[\mathfrak{m}]
           \calF_{\Z}f
       \bigr)
   \bigr\|_{\ell^{p}(\Z)}
   \lesssim_p
   N^{C_p\frac{\log\log\log N}{\log\log N}}
   \|f\|_{\ell^p(\ZZ)}.
\end{equation*}
More recently, the author, together with Kosz, Mirek, and Zhang
\cite[Theorem 1.9]{KMSZ}, improved the exponent in this bound
to $1/\log\log N$.

\begin{theorem}\label{thm:IW:upper}
Fix $p\in(1,\infty)$. There exists a constant
$C_p>0$ such that the following holds
for every $N\in\N$. Assume that
\begin{align}\label{eq:support}
    0<\varepsilon\leq (2pN^p)^{-1}
\end{align}
and let $\mathfrak m\colon\RR\to\C$ be a measurable function
supported in $[-1/2,1/2]$. Moreover, let us assume that
\begin{equation*}
    \|\mathcal{F}_{\R}^{-1}
    (\mathfrak m\mathcal{F}_{\R}f)\|_{L^p(\RR)}
    \lesssim_p\|f\|_{L^p(\RR)},
\end{equation*}
for all $f\in L^2(\RR)\cap L^p(\RR)$. Then
\begin{equation}\label{IW UPPER}
    \bigl\|\mathcal{F}_{\Z}^{-1}
        \bigl(\Delta_{\varepsilon,\mathcal{R}_{\leq N}}[\mathfrak{m}]\calF_{\Z}f\bigr)
    \bigr\|_{\ell^p(\Z)}\lesssim_p N^{C_p/\log\log N}\|f\|_{\ell^p(\Z)}
\end{equation}
for all $f\in\ell^2(\Z)\cap\ell^p(\Z)$.
\end{theorem}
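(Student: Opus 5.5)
We would prove Theorem~\ref{thm:IW:upper} by following the strategy of Ionescu--Wainger in the form refined by Mirek--Stein--Zorin-Kranich, adapted to the canonical fractions $\mathcal{R}_{\leq N}$. The aim is to lose only a factor exponential in the number of pieces. Since every positive integer $q\le N$ has at most $O(\log N/\log\log N)$ distinct prime factors, the loss should be of the form $C^{\log N/\log\log N}=N^{C/\log\log N}$.

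\emph{Step 1: factorization of the denominators.} Every $q\le N$ can be written as $q=q_1q_2\cdots q_k$ with pairwise coprime prime powers $q_i=p_i^{\alpha_i}$ and $k\lesssim \log N/\log\log N$. By the Chinese remainder theorem, $\mathcal{R}_{\leq N}\subseteq \sum_{i}\mathcal{R}^{(i)}$, where the $\mathcal{R}^{(i)}$ are sets of fractions with prime-power denominators. We group the primes into $D\approx \log N/\log\log N$ classes, for instance by a greedy or dyadic partition of the primes up to $N$ ordered by size. This expresses $\mathcal{R}_{\leq N}$ as a union of at most $2^{O(D)}$ ``product'' sets $\Lambda=\Lambda_1+\dots+\Lambda_D$, where each $\Lambda_j$ consists of fractions whose denominators are built from primes in the $j$-th class. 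The union must be arranged so that overlaps can be handled by inclusion--exclusion, or by disjointification with bounded multiplicity.

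\emph{Step 2: the multi-frequency estimate for one product set.} For a single set $\Lambda$ with $D$ factors, we run the Ionescu--Wainger induction on $D$. The key inductive estimate is a vector-valued sampling principle (the Magyar--Stein--Wainger transference). Because $\varepsilon\le(2pN^p)^{-1}$, the arcs around the points of $\Lambda$ are disjoint. This lets us replace $\mathfrak m$ by its periodized version, and lets us bound
\[
\Bigl\|\sum_{\theta\in\Lambda_j}e(\theta n)\,T_{\mathfrak m}\bigl(e(-\theta\cdot)f\bigr)\Bigr\|_{\ell^p}
\]
through the $L^p(\R)$-boundedness of $\mathfrak m$ tensored with an $\ell^p$-bound for the corresponding exponential sums on $\Z/Q\Z$. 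Each induction step costs a constant $C_p$ factor, provided that within a class the denominators are ``small'' compared to $N^{p}$ (this is where \eqref{eq:support} enters). The total cost for $\Lambda$ is therefore $C_p^{D}$.

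\emph{Step 3: summation.} Summing over the $2^{O(D)}$ product sets with the triangle inequality gives a total bound $(C_p')^{D}=N^{C_p/\log\log N}$, which is \eqref{IW UPPER}.

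The main obstacle is Step~1 combined with Step~2: finding a decomposition with only $O(\log N/\log\log N)$ levels, each costing $O_p(1)$. The earlier bound with $\log\log\log N$ came from allowing many primes per level. I would first try classifying each $q$ by the set of primes $p$ exceeding $\log N$ that divide it (at most $\log N/\log\log N$ of them). The smooth part, composed of primes at most $\log N$, would be treated as a single block with an $N^{o(1)}$-sized modulus. For that block we would use the $\ell^p$ bound for Ramanujan-type sums over a complete residue system of size $\exp(O(\log N/\log\log N))$, via the prime number theorem.
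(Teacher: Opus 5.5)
The paper does not prove Theorem~\ref{thm:IW:upper}; it quotes it from \cite[Theorem~1.9]{KMSZ}. Your sketch therefore has to stand on its own, and it has a genuine gap. The gap sits exactly at the point that separates $N^{C_p/\log\log N}$ from the earlier $N^{C_p\log\log\log N/\log\log N}$ of \cite{KMPWW}.

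Counting prime factors is the easy part; the difficulty is the cut $q\le N$. By the Chinese remainder theorem every $\theta\in\QQ/\Z$ splits uniquely as $\theta_1+\dots+\theta_D$, where $\theta_j$ has denominator $m_j$ built from the primes of the $j$-th class. Membership in a product set $\Lambda_1+\dots+\Lambda_D$ is a product condition on the $\theta_j$. Membership in $\mathcal{R}_{\leq N}$ is the non-product condition $m_1\cdots m_D\le N$.

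Since the arcs are disjoint, writing $\Delta_{\varepsilon,\mathcal{R}_{\leq N}}[\mathfrak m]$ as a signed combination of product-set multipliers amounts to writing $\ind{\{m_1\cdots m_D\le N\}}$ as a combination of products of functions of the single components. Already for two classes, restricted to $\theta=1/m_1+1/m_2$, this is a staircase matrix. Its rank equals its number of distinct thresholds, so any exact ``inclusion--exclusion'' needs at least that many pieces, and nothing in your construction keeps this at $2^{O(D)}$.

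The standard way around this is to cut by the dyadic sizes $2^{k_j}$ of the $m_j$. But there are $\binom{\lfloor\log_2N\rfloor+D}{D}$ tuples with $\sum_jk_j\le\log_2N$. For $D\simeq\log N/\log\log N$ this is $N^{c\log\log\log N/\log\log N}$, and pieces with $\sum_jk_j$ within $D$ of $\log_2N$ still straddle the cut. So this route only reproduces the old bound.

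Your Step~1 asserts $2^{O(D)}$ pieces without a construction, and your last paragraph concedes this is the main obstacle. It is precisely the new content of the theorem. Likewise, Step~2's claim that each level of the Ionescu--Wainger induction costs only $C_p$ comes from versions with a bounded number of levels. Uniformity over $D\simeq\log N/\log\log N$ levels is part of what would have to be proved.

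The role you give to \eqref{eq:support} and your treatment of the smooth block are also wrong as stated.

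\emph{What \eqref{eq:support} actually buys.} Disjointness of the arcs around $\mathcal{R}_{\leq N}$ needs only $\varepsilon\le N^{-2}$. A Magyar--Stein--Wainger-type periodization of a block needs $\varepsilon\lesssim Q^{-1}$, where $Q$ is a common denominator of the block. Since \eqref{eq:support} only gives $\varepsilon\simeq N^{-p}$, no block whose denominators have super-polynomial least common multiple can be periodized this way. A class containing many primes is such a block.

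\emph{The smooth block is not small.} It is exactly such a block. The $\log N$-smooth part of a single $q\le N$ can be comparable to $N$; take $q=2^{\lfloor\log_2N\rfloor}$. The least common multiple of all smooth parts is
\begin{equation*}
\prod_{p\le\log N}p^{\lfloor\log N/\log p\rfloor}=N^{(1+o(1))\pi(\log N)}.
\end{equation*}
So there is no complete residue system of size $\exp(O(\log N/\log\log N))$ to transfer from.

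\emph{What survives.} The number of smooth parts is harmless: there are only $\Psi(N,\log N)=N^{O(1/\log\log N)}$ of them. But each smooth part $s$ leaves you with the rough fractions subject to the cut $r\le N/s$, which is again the unresolved problem above.
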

For further details on the Ionescu--Wainger theory and its history,
we refer the reader to \cite{KMSZ} and the references therein.
A natural question, motivated by Tao's result \cite{TaoIW} for Ionescu--Wainger
fractions, was whether the constant in the Ionescu--Wainger theorem
for canonical fractions could be chosen independently of $N$.
In the same paper, we obtained a surprising negative answer
to this question; see \cite[Theorem 1.15]{KMSZ}.
More precisely, we proved the following result.
\begin{theorem} \label{thm:IW:lower} Let $p \in (1,\infty) \setminus \{2\}$. Then there exists $c_p \in (0,1)$ such that the following is true. Fix a nonzero Schwartz function $\mathfrak m \colon \R \to \C$ supported on $[-1/2,1/2]$ and let $(\vartheta_N)_{N \in\N}$ be a~given sequence of numbers $\vartheta_N \in (0,1)$. Then, for infinitely many $N \in \N$, we have
\begin{equation*}
\bigl\| \mathcal{F}_{\Z}^{-1}(\Delta_{\varepsilon_N,\mathcal{R}_{\leq N}}[\mathfrak{m}]\calF_{\Z}(\cdot))  \bigr\|_{\ell^{p}(\Z) \to \ell^{p}(\Z)}
\gtrsim_{p}
e^{(\log N)^{c_p}} = N^{1 / (\log N)^{1-c_p}}
\end{equation*}
with some $0 < \varepsilon_N \leq \vartheta_N$.
\end{theorem}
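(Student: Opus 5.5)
The plan is to reduce the lower bound to a finite-group problem on $\Z/Q\Z$ for a suitable modulus $Q$. On that group the relevant multiplier becomes a ``degree truncation'' on a product of cyclic groups, and I would then exhibit exponential growth in the number of factors when $p\neq 2$.

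\emph{Step 1 (transference to $\Z/Q\Z$).} Fix $Q\in\N$ and let $A_Q:=\{a/Q\in\QQ/\ZZ:\ a/Q\in\mathcal{R}_{\leq N}\}$, i.e.\ those $a/Q$ whose reduced denominator is at most $N$. Take test functions $f(x)=g(x)\,\phi(\delta x)$, where $g$ is $Q$-periodic and $\phi$ is a Schwartz function with $\widehat\phi$ supported near $0$. Then $\calF_\Z f$ is concentrated in $\delta$-neighbourhoods of the points $a/Q$.

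Choose $\varepsilon_N\le\vartheta_N$ so small that two things hold. First, distinct fractions in $\mathcal{R}_{\leq N}\cup Q^{-1}\Z$ are $\gg\varepsilon_N$ apart. Second, $\delta\ll\varepsilon_N$, so that $\mathfrak m(\varepsilon_N^{-1}\cdot)$ is essentially the constant $\mathfrak m(0)$ on each bump; we may assume $\mathfrak m(0)\ne0$ after translating $\mathfrak m$ slightly, or localise at a point where $\mathfrak m\neq 0$.

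Under these choices the operator acts on $f$ essentially as $(T_Q g)(x)\,\phi(\delta x)\,\mathfrak m(0)$, up to errors that vanish as $\delta\to0$. Here $T_Q$ is the Fourier multiplier on $\Z/Q\Z$ with symbol $\ind{A_Q}$. Letting $\delta\to0$ and using periodicity, I expect to obtain
\[
\bigl\|\calF_\Z^{-1}(\Delta_{\varepsilon_N,\mathcal{R}_{\leq N}}[\mathfrak m]\calF_\Z)\bigr\|_{\ell^p\to\ell^p}\gtrsim |\mathfrak m(0)|\,\|T_Q\|_{\ell^p(\Z/Q\Z)\to \ell^p(\Z/Q\Z)}.
\]
This is a standard sampling-type argument in the spirit of Magyar--Stein--Wainger.

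\emph{Step 2 (choice of $Q$ and reduction to a product space).} Let $Q=p_1\cdots p_k$ be a product of distinct primes in a short interval $[P,P(1+1/m)]$, where $m\approx \log N/\log P$ is chosen so that a product of $j$ of these primes is $\le N$ exactly when $j\le m$. By the CRT, $\Z/Q\Z\cong\prod_i \Z/p_i\Z$. The reduced denominator of a character is $\prod_{i\in S}p_i$, where $S$ is the set of coordinates on which the character is nontrivial. Hence $T_Q$ is precisely the projection $\Pi_{\le m}$ onto Efron--Stein levels $|S|\le m$ on the product probability space. Taking $k=2m$, the task is to show
\[
\|\Pi_{\le k/2}\|_{L^p\to L^p}\ge e^{c_p k}
\]
uniformly in the sizes of the $p_i$, for every $p\neq2$.

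\emph{Step 3 (the main obstacle: exponential growth of level truncation).} This is the heart of the proof. I would first try tensor test functions $F=\bigotimes_i (1+t h_i)$, where each $h_i$ has mean zero on $\Z/p_i\Z$; these are Riesz products. Then $\Pi_{\le m}F$ is the sum over $|S|\le m$ of $t^{|S|}\prod_{i\in S}h_i$. A suitable choice of $t$ (possibly complex), together with $h_i$ taking two values with unequal weights, should make $\|F\|_p^p$ factor as a product of one-dimensional quantities, while the truncation at the median level destroys that factorisation. A large-deviation/saddle-point comparison of the two sides then gives a ratio exponential in $k$ whenever $p\neq2$. Alternatively, duality with $p'$ reduces matters to $p>2$ via a hypercontractivity-type lower bound.

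\emph{Step 4 (counting primes).} By the prime number theorem in short intervals, the interval $[P,P(1+1/m)]$ contains $\gtrsim P/(m\log P)$ primes, so one needs $k=2m\lesssim P/(m\log P)$. Combined with $m\approx\log N/\log P$, choosing $P=(\log N)^{A}$ for a suitable $A$ makes this consistent and gives $k\gtrsim(\log N)^{c}$. This yields the lower bound $e^{c_pk}\ge e^{(\log N)^{c_p}}$ along the infinitely many $N$ of the form $P^{m}$ (suitably adjusted).
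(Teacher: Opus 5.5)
Your reduction is sound in outline. Step~1 is the de~Leeuw-type transference of Section~3. Modulating the test function by $e(\varepsilon_N\xi_0x)$ with $\mathfrak m(\xi_0)\neq0$ correctly handles a general nonzero $\mathfrak m$; translating $\mathfrak m$ itself is not allowed, since $\mathfrak m$ is fixed. In Step~2 the CRT does identify the multiplier on $\Z/Q\Z$ with the Efron--Stein truncation $\Pi_{\le m}$.

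The gap is Step~3, which you yourself call the heart of the proof. The bound $\|\Pi_{\le k/2}\|_{L^p\to L^p}\ge e^{c_pk}$ is supported only by ``should'' and ``I expect''. You never fix $t$ or the $h_i$. You never estimate $\|\Pi_{\le m}F\|_p$ for the truncated Riesz product, and that is exactly the hard large-deviation quantity, because truncating at the median level destroys the product structure you rely on. Hypercontractivity bounds level projections from above, so your suggested alternative cannot produce a lower bound. Without Step~3 the argument gives no growth in $N$ at all.

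The missing idea is to truncate just below the top level, where the complementary projection is a tensor product.
\begin{itemize}
\item The paper takes $N=Q-1$ with $Q=q_1\cdots q_K$ squarefree. Every proper divisor of $Q$ is at most $Q/2\le N$, so the multiplier on $\Z/Q\Z$ is $I-D_Q$ with $D_Q=\bigotimes_{j}(I-E_{q_j})$.
\item The $L^p$ norm of a tensor product is at least the product of the norms, so $\|P^{(Q)}_{N}\|\ge\prod_j\|I-E_{q_j}\|-1$.
\item For the statement at hand, KMSZ use this same factorization and test each factor on $\delta_0$. This gives factors $1+q_j^{1-p}$ for $p<2$ (duality covers $p>2$), and a Mertens-type estimate over consecutive primes then yields $e^{(\log N)^{c_p}}$.
\item The present paper instead tests on two-valued functions $x\ind{A}+y\ind{A^c}$ with $|A|/q\to\theta\neq1/2$. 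This reduces $I-E_q$ isometrically to a two-point space whose norm is the Franchetti constant $C_p(\theta)$, and $C_p(\theta)>1$ for $p\neq2$ by the equality case of H\"older. The result is the stronger bound $N^{c_p/\log\log N}$, which implies the statement.
\end{itemize}

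Your setup can be repaired the same way. Restrict $\Pi_{\le m}$ to functions depending only on a set $T$ of $m+1$ coordinates. Then $\Pi_{\le m}f=f-f_T$, where $f_T$ is the top Efron--Stein component. Hence $\|\Pi_{\le m}\|\ge\|\bigotimes_{i\in T}(I-E_{p_i})\|-1\ge(1+\eta_p)^{m+1}-1$ once the primes are large. Alternatively, simply take $k=m+1$ primes. Either way, the short-interval prime counting in Step~4 becomes unnecessary.
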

We observe that, for sufficiently large $N$,
\begin{equation*}
    N^{1/\log\log N}> N^{1/(\log N)^{1-c_p}}.
\end{equation*}
This raises a natural question: what is the optimal dependence
on $N$ of the constant in the Ionescu--Wainger multiplier theorem?
In this note, we answer this question by showing that the dependence
on $N$ in Theorem~\ref{thm:IW:upper} is sharp.
More precisely, we prove the following result.
\begin{theorem} \label{thm:IW:lower_new} Let $p \in (1,\infty) \setminus \{2\}$. Then there exists $c_p \in (0,1)$ such that the following is true. Fix a Schwartz function $\mathfrak m \colon \R \to \C$ supported on $[-1/2,1/2]$ with $\mathfrak{m}(0)=1$ and let $(\varepsilon_N)_{N \in \Z_+}$ be a~given sequence of numbers $0<\varepsilon_N \leq N^{-2}$. Then, for infinitely many $N \in \N$, we have
\begin{equation}\label{eq:main lower}
\bigl\| \mathcal{F}_{\Z}^{-1}(\Delta_{\varepsilon_N,\mathcal{R}_{\leq N}}[\mathfrak{m}]\calF_{\Z}(\cdot))  \bigr\|_{\ell^{p}(\Z) \to \ell^{p}(\Z)}
\gtrsim_p  N^{c_p/\log\log N}
\end{equation}
\end{theorem}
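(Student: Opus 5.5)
The plan is to transfer the problem to a finite group, then use a product structure to identify the multiplier with a ``low-degree projection'' whose $\ell^p$ norm grows exponentially in the degree.

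\textbf{Step 1: reduction to a periodic multiplier.} Fix a squarefree modulus $Q=p_1\cdots p_k$, to be chosen below in terms of $N$. Since $\mathfrak m$ is Schwartz with $\mathfrak m(0)=1$ and $\varepsilon_N\le N^{-2}$, the arcs around distinct fractions with denominator $\le N$ are disjoint. A standard sampling/transference argument (test against functions of the form $f(x)=g(x)\phi(\delta x)$ with $g$ $Q$-periodic and $\phi$ a slowly varying bump, then let $\delta\to0$) should give
\[
\bigl\|\mathcal F_{\Z}^{-1}(\Delta_{\varepsilon_N,\mathcal R_{\le N}}[\mathfrak m]\calF_{\Z})\bigr\|_{\ell^p\to\ell^p}\gtrsim_p \|T_{Q,N}\|_{\ell^p(\Z/Q\Z)\to\ell^p(\Z/Q\Z)}.
\]
Here $T_{Q,N}$ is the Fourier multiplier on $\Z/Q\Z$ with symbol $\ind{\{a/Q\in\mathcal R_{\le N}\}}$. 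I expect this step to follow the transference already used for Theorem~\ref{thm:IW:lower} in \cite{KMSZ}; what matters is that each frequency $a/Q$ sees only its own bump, with weight $\mathfrak m(0)=1$.

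\textbf{Step 2: choice of primes and product structure.} By the Chinese remainder theorem, $\Z/Q\Z\cong\prod_i\Z/p_i\Z$, and $L^2(\Z/p_i\Z)=\mathbf 1\oplus V_i$ with $V_i$ the mean-zero functions. The reduced denominator of $a/Q$ is $\prod_{i\in S}p_i$, where $S$ is the set of coordinates in which the frequency is nonzero.

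Take $M=(\log N)^2$ and all primes $p_i\in[M,2M]$. Then $\prod_{i\in S}p_i\le N$ holds whenever $|S|\le s:=\lfloor \log N/\log(2M)\rfloor$ and fails whenever $|S|>\log N/\log M$. To make the threshold exact, I would restrict to $N$ along a subsequence; this is presumably why the result is stated for infinitely many $N$. Along that subsequence, $T_{Q,N}$ is exactly the projection $P_{\le s}$ onto $\bigoplus_{|S|\le s}\bigotimes_{i\in S}V_i$, the ``degree $\le s$'' part of the Efron--Stein decomposition. The prime number theorem supplies $k\ge 2s$ such primes, since $M/\log M\gg\log N/\log\log N$. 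We get $s\asymp \log N/\log\log N$, so $e^{c s}=N^{c'/\log\log N}$.

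\textbf{Step 3 (main obstacle): exponential lower bound $\|P_{\le s}\|_{L^p\to L^p}\ge e^{c_p s}$ for $p\neq2$, when $k\ge 2s$.} By duality it suffices to take $p>2$. First I would embed the Boolean case. In each factor choose a mean-zero $\pm1$-valued (or bounded, unit-variance) function $h_i$ on $\Z/p_i\Z$, balanced up to $O(1/p_i)$. Restricted to functions $F(h_1,\dots,h_k)$ that are multilinear in the $h_i$, $P_{\le s}$ acts as the Walsh level truncation, and the law of $(h_i)$ is close to i.i.d.\ Rademacher.

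Then test on $F=\prod_{i\le k}(1+\lambda h_i)$ with a fixed small $\lambda$, or alternatively on $F=\prod_i(1+ h_i)$ normalized. Here $P_{\le s}F=\sum_{|S|\le s}\lambda^{|S|}h_S$ is a truncated generating function. Comparing its $L^p$ norm, via a saddle-point/large-deviation computation with $k=2s$, to $\|F\|_p=\prod_i\|1+\lambda h_i\|_p$ should give a ratio $e^{c_ps}$ with $c_p>0$ exactly when $p\neq2$; for $p=2$ the projection is an orthogonal projection and the ratio is $1$. If this explicit computation is too delicate, I would fall back on the known result that Rademacher level projections have $L^p$ norm exponential in $s$ for $p\ne2$, adapting the argument used in \cite{KMSZ}. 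Errors from the imperfect balance of $h_i$ are $O(k/M)=o(1)$ multiplicatively.

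Combining the three steps yields \eqref{eq:main lower} along the chosen subsequence of $N$.
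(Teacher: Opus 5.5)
\textbf{Gap in Step~1.} Step~1 does not hold for your choice of $Q$, and this breaks the whole approach. Testing with $Q$-periodic functions times a slowly varying bump recovers the periodic symbol $a/Q\mapsto\Delta_{\varepsilon_N,\mathcal R_{\le N}}[\mathfrak m](a/Q)$. This equals $\ind{\{a/Q\in\mathcal R_{\le N}\}}$ only if every $a/Q$ with reduced denominator larger than $N$ lies at distance more than $\varepsilon_N/2$ from all of $\mathcal R_{\le N}$. Disjointness of the arcs does not give this.

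Such distances can be as small as $1/(Qq)$. Take a prime $q\in(N/2,N]$ not dividing $Q$ and solve $aq-bQ=1$; then $(a,Q)=1$ and $|a/Q-b/q|=1/(Qq)$. So you essentially need $\varepsilon_N<2/(QN)$. In Theorem~\ref{thm:IW:lower} one is free to shrink $\varepsilon_N$, but here $\varepsilon_N$ is given and may equal $N^{-2}$, which forces $Q<2N$. Your construction has $Q\ge M^{2s}=N^{2-o(1)}$. For $\varepsilon_N=N^{-2}$, the symbol at the $a/Q$ above is therefore $\mathfrak m(N^2/(Qq))\approx\mathfrak m(0)=1$ instead of $0$, and there are many such frequencies. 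The transferred operator is not $T_{Q,N}$. This is why the paper takes $Q=N+1$: then $\varepsilon_N/2\le 1/(2N^2)<1/(QN)$.

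\textbf{What the forced regime requires.} Once $N<Q<2N$, every proper divisor of a squarefree $Q$ is at most $Q/2\le N$. Hence $T_{Q,N}=I-D_Q$, with $D_Q=\bigotimes_j(I-E_{q_j})$ the top-degree projection, and no middle-level truncation is available. In this regime your balanced Boolean embedding gives nothing: on the uniform cube the top Walsh character is unimodular, so $\|P_{=k}\|_{p\to p}=1$, and near-balanced $h_i$ change this only by a factor $1+o(1)$.

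The missing idea is to use \emph{unbalanced} two-valued functions. On $\mathrm{span}\{\ind{A},\ind{A^c}\}$ with $|A|/q\approx\theta\neq1/2$, the operator $I-E_q$ is isometrically $I-\mathbb E_\theta$ on a two-point space. Its $\ell^p$ norm is the Franchetti constant $C_p(\theta)$, which exceeds $1$ for $p\neq2$. Tensorizing gives $\|D_Q\|\ge(1+\eta_p)^K$, and $K\simeq\log N/\log\log N$ when $q_1,\dots,q_K$ are consecutive primes.

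Independently of Step~1, two further points would need repair. First, an exact threshold in Step~2 requires clustering the primes in a short interval; passing to a subsequence of $N$ alone does not achieve it. Second, $\prod_i(1+\lambda h_i)$ with small fixed $\lambda$ gives a ratio $1+o(1)$, since its $L^2$ mass sits at levels about $k\lambda^2\ll s$.
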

The rest of the paper is devoted to proving Theorem~\ref{thm:IW:lower_new}.
\section{Overview}
As in the proof of \cite[Theorem 1.15]{KMSZ}, we consider the case
where $N=q_1\cdots q_K$ is a product of distinct primes. The following
kernel appears in that proof (see \cite[p.~27]{KMSZ}),
\begin{equation}\label{eq:kernel operator}
    h_N(n):=\prod_{i=1}^K
    \bigl(q_i\mathds{1}_{q_i\mid n}-1\bigr),
    \qquad n\in\Z/N\Z.
\end{equation}
Since $N$ is a product of distinct primes, the Chinese remainder
theorem gives the identification
\begin{equation*}
    \Z/N\Z\simeq\prod_{i=1}^K(\Z/q_i\Z).
\end{equation*}
This allows us to factor the kernel $h_N$ into components of the form
\begin{equation*}
    q_i\mathds{1}_{q_i\mid n}-1=q_i\mathds{1}_{\{0\}}(n)-1,\qquad n\in\Z/q_i\Z.
\end{equation*}
Each such function is the convolution kernel of the operator $I-E_{q_i}$ on
$\Z/q_i\Z$ with the normalized counting measure, where $I$ denotes the identity operator and
$E_{q_i}$ is the averaging operator defined by
\begin{equation*}
    E_{q_i}f(x):=\frac{1}{q_i}\sum_{y\in\Z/q_i\Z}f(y),
    \qquad x\in\Z/q_i\Z.
\end{equation*}
This factorization of the kernel was also observed in \cite{KMSZ}.
There, the lower bound was obtained by testing the operator
$I-E_q$ on $f=\delta_0=\mathds{1}_{\{0\}}$, which gives
\begin{equation*}
    \big\|(I-E_{q_i})\delta_0\big\|_{\ell^p(\Z/q_i\Z)}^p
    =(1-q_i^{-1})^p+q_i^{-p}(q_i-1)
    \gtrsim_p (1-q_i^{-1})^p(1+q_i^{1-p}).
\end{equation*}
For $p\in(1,2)$, this implies
\begin{equation*}
    \big\|(I-E_{q_i})\delta_0\big\|_{\ell^p(\ZZ/q_i\Z)}\gtrsim_p1+q_i^{1-p}.
\end{equation*}
Taking natural logarithms and arguing as in \cite{KMSZ}, we obtain
\begin{equation}\label{mertens estimate}
    \prod_{i=1}^K(1+q_i^{1-p})\gtrsim \frac{q_K^{2-p}}{\log q_K}.
\end{equation}

Our starting observation is that $f=\delta_0$ does not maximize the norm, even when $p=2$. Indeed,
\begin{equation*}
    \big\|(I-E_{q_i})\delta_0\big\|_{\ell^2(\Z/q_i\Z)}=(1-q_i^{-1})^{1/2},
\end{equation*}
whereas $I - E_{q_i}$ has norm $1$ and, for example,  $f=\delta_0-\delta_1$ is a maximizing function.

The key difference in our approach is that we use two-valued functions to show that, for each $p\in(1,\infty)\setminus\{2\}$,
the operator norm of $I-E_{q_i}$ is uniformly larger than $1$ for all sufficiently large $q_i$. More precisely, we show that there exists $\eta_p>0$, independent of $q_i$, such that
\begin{equation}\label{lower bound eq1}
    \|I-E_{q_i}\|_{\ell^p(\Z/q_i\Z)\to\ell^p(\Z/q_i\Z)}
    >1+\eta_p
\end{equation}
for all sufficiently large $q_i$. Taking the product of these
lower bounds yields
\begin{equation*}
    \prod_{i=1}^K(1+\eta_p)=(1+\eta_p)^K,
\end{equation*}
which leads to an improvement over \eqref{mertens estimate}.

\subsection{Organization and structure}
The paper is organized as follows. In Section~\ref{sec:transference},
we show that the norm in \eqref{eq:main lower} can be bounded
from below by the norm of an operator being the Fourier projection onto reduced fractions with one fixed denominator. Similar idea was used in \cite{KMSZ} but here we use the language of the finite groups. In Section~\ref{sec:structure},
we factorize this Fourier projection by using the Chinese remainder theorem to get that this operator is related to the kernel \eqref{eq:kernel operator}.
Finally, in Section~\ref{sec:lower bound}, we prove the estimate
\eqref{lower bound eq1}.

\subsection{Acknowledgments}
The author is grateful to Dariusz Kosz and Tomasz Z. Szarek for reading an early version of the manuscript and for their precise comments and remarks, which have significantly improved this paper.

\subsection{Notation}
We write
\begin{equation*}
     \N:=\{1,2,\ldots\},\qquad\Z,\ \R,\ \C
\end{equation*}
for the positive integers, integers, real numbers, and complex
numbers, respectively. We denote the torus by $\T:=\R/\Z$ and identify it with $[0,1)$, equipped with addition modulo $1$.

For each $q\in\N$, we identify the quotient group $\Z/q\Z$
with the set
\begin{equation*}
    \{1,2,\ldots,q\},
\end{equation*}
equipped with addition modulo $q$.

Similarly, we identify its dual group with $q^{-1}\Z/\Z$,
for which we use the representatives
\begin{equation*}
    \Big\{\frac{1}{q},\frac{2}{q},\ldots,1\Big\},
\end{equation*}
with addition modulo $1$. In these representations, $q$ and $1$
represent the zero elements of $\Z/q\Z$ and $q^{-1}\Z/\Z$,
respectively.

If $A$ is a set or $S$ is a statement, then $\ind{A}$ and
$\ind{S}$ denote the corresponding indicator functions. The
cardinality of a finite set $A$ is denoted by $|A|$. For integers
$a$ and $q$, we write $(a,q)$ for their greatest common divisor.

For two quantities $A,B>0$, we write $A \lesssim B$ or $B \gtrsim A$ if $A\le CB$ for some $C>0$. If $A \lesssim B\lesssim A$, then we write $A \simeq B$. We will use the symbols $\lesssim_{\delta}$ or $\simeq_{\delta}$ to emphasize that the implicit constant $C$ depends on a parameter $\delta$.

\subsubsection{Groups and function spaces}

Let $(\G,+)$ be a locally compact abelian (LCA) group equipped
with a Haar measure $\lambda_{\G}$. For each $p\in[1,\infty)$,
let $L^p(\G)$ denote the space of measurable functions on $\G$,
identified up to equality almost everywhere, for which
\begin{equation*}
    \|f\|_{L^p(\G)}:=\Big(\int_{\G}|f(x)|^p\,\mathrm{d}\lambda_{\G}(x)\Big)^{1/p}
\end{equation*}
is finite.

In this paper, we are primarily interested in two cases.
For $\G=\Z$, we use counting measure and denote the corresponding
space by $\ell^p(\Z)$. For $\G=\Z/q\Z$, we use normalized counting
measure $\lambda_{\Z/q\Z}$ and denote the corresponding space
by $\ell^p(\Z/q\Z)$, with norm
\begin{equation*}
    \|f\|_{\ell^p(\Z/q\Z)}:=\Big(\frac{1}{q}\sum_{x\in\Z/q\Z}|f(x)|^p\Big)^{1/p}.
\end{equation*}

\subsubsection{Fourier transform}
We shall write $e(z)=e^{2\pi {\bf i} z}$ for every $z\in\C$, where ${\bf i}^2=-1$.  As before let $(\G, +)$ be an LCA group equipped with a Haar measure $\lambda_{\G}$. It is well-known that every LCA group $\G$ has a Pontryagin dual $\hat \G = (\hat \G,+)$, an LCA group with a Haar measure $\lambda_{\hat \G}$ and a pairing $\G \times \hat \G\ni (x,\xi) \mapsto \langle x , \xi\rangle\in\TT$, such that the Fourier transform $\calF_{\G} \colon L^1(\G) \to L^\infty(\hat \G)$ given by
\begin{equation*}
     \calF_{\G} f(\xi) := \int_{\G} f(x) e(-\langle x , \xi\rangle)\,{\rm d}\lambda_{\G}(x), \qquad \xi\in\hat \G,
\end{equation*}
extends to a unitary map from $L^2(\G)$ to $L^2(\hat \G)$; in particular, we have Plancherel's identity
\begin{equation*}
    \|\calF_\G f\|_{L^2(\hat \G)}=\|f\|_{L^2(\G)}, \qquad f\in L^2(\G).
\end{equation*}
We will use the inverse Fourier transform $\calF_\G^{-1} \colon L^2(\hat \G) \to L^2(\G)$ which is given by
\begin{equation*}
     \calF_\G^{-1} f(x) = \int_{\hat \G} f(\xi) e(\langle x , \xi\rangle)\,{\rm d}\lambda_{\hat \G}(\xi), \qquad x\in\G,
\end{equation*}
for any  $f\in L^1(\hat \G) \cap L^2(\hat \G)$.

We list the pairs $(\G,\hat \G)$ of Pontryagin dual LCA groups we shall work with:
\begin{itemize}
\item [(i)]  If $\G = \R$ with Lebesgue measure $\lambda_{\R} = {\rm d}x$, then $\hat \G = \R$ with Lebesgue measure $\lambda_{\R} = {\rm d}\xi$ is a Pontryagin dual, with pairing $\langle x , \xi\rangle := x \cdot \xi$. Further, for $f \in L^1(\R)$,
\begin{align*}
\mathcal F_{\R} f(\xi)  :=  \int_{\R} f(x) e(-x\cdot\xi) \,{\rm d}x, \qquad \xi\in\R.
\end{align*}
\item[(ii)]  If $\G = \Z$ with counting measure $\lambda_{\Z}$, then $\hat \G = \TT$ with Lebesgue measure $\lambda_{\TT} = {\rm d}\xi$ is a Pontryagin dual, with pairing $\langle x , \xi\rangle :=x \cdot \xi$ and for  $f \in \ell^1(\Z)$,
\begin{align*}
\mathcal F_{\Z}f(\xi) :=\sum_{x \in \Z} f(x) e(-x\cdot\xi), \qquad \xi\in \TT.
\end{align*}
\item[(iii)] If $\G = \Z/q\Z$ for some $q \in \N$ with normalized counting measure $\lambda_{\Z/q\Z}$, then $\hat{\G} = q^{-1}\Z/\Z$ with counting measure $\lambda_{q^{-1}\Z/\Z}$ is a Pontryagin dual, with pairing $\langle x , \xi\rangle :=x \cdot \xi$. Also, for any $f \in \ell^1(\Z/q\Z)$,
\begin{align*}
\mathcal F_{\Z/q\Z}f(\xi):= q^{-1}\sum_{x \in \Z/q\Z} f(x) e(-x\cdot\xi), \qquad \xi\in q^{-1}\Z/\Z.
\end{align*}
\end{itemize}
\section{Transference from \texorpdfstring{$\Z/Q\Z$}{Z/QZ} to \texorpdfstring{$\Z$}{Z}}\label{sec:transference}
Fix a large integer $Q\in\N$ and set $N:=Q-1$. Consider $\Z/Q\Z$ with normalized counting measure and define the multiplier
\begin{equation*}
    m_N^{(Q)}(a/Q):=\ind{\{Q/(a,Q)\leq N\}},\quad a/Q \in Q^{-1} \Z / \Z.
\end{equation*}
Let $P_N^{(Q)}$ denote the associated Fourier multiplier operator,
defined for $F\in\ell^1(\Z/Q\Z)$ by
\begin{equation*}
    P_N^{(Q)}F(x):=\sum_{a/Q\in Q^{-1}\Z/\Z}
    m_N^{(Q)}(a/Q)\mathcal{F}_{\Z/Q\Z}F(a/Q)\,e(x\cdot a/Q),\qquad x\in\Z/Q\Z.
\end{equation*}
In other words, $P_N^{(Q)}$ is the Fourier projection onto the
elements in $Q^{-1}\Z/\Z$ whose reduced denominators are
at most $N$.

Let $\mathfrak{m}\in\mathcal{S}(\R)$ be supported in $[-1/2,1/2]$
and satisfy $\mathfrak{m}(0)=1$. Let $(\varepsilon_N)_{N\in\N}$
be any sequence satisfying $0<\varepsilon_N\leq N^{-2}$.
We will show that
\begin{equation}\label{eq:3:trans}
    \bigl\|
        \mathcal{F}_{\Z}^{-1}
        \bigl(\Delta_{\varepsilon_N,\mathcal{R}_{\leq N}}[\mathfrak{m}]
            \calF_{\Z}(\cdot)\bigr)\bigr\|_{\ell^p(\Z)\to\ell^p(\Z)}
    \geq
    \|P_N^{(Q)}\|_{\ell^p(\Z/Q\Z)\to\ell^p(\Z/Q\Z)}.
\end{equation}

First, choose a nonzero function $\varphi\in\mathcal{S}(\R)$ such that
$\supp\calF_{\R}\varphi\subseteq[-1,1]$. For $L\geq1$, define
\begin{equation}\label{eq:2:def}
    f_L(x):=\varphi(x/L)F(x\bmod Q),
    \qquad x\in\Z.
\end{equation}
Since $\varphi\in\mathcal{S}(\R)$, we have $f_L\in\ell^p(\Z)$ for every $p,L\in[1,\infty)$. Using the Fourier expansion of $F$, we can write
\begin{equation*}
    f_L(x)
    =\sum_{a/Q\in Q^{-1}\Z/\Z}
    \mathcal{F}_{\Z/Q\Z}F(a/Q)\varphi(x/L)e(x\cdot a/Q),
    \qquad x\in\Z.
\end{equation*}
By the Poisson summation formula,
\begin{equation*}
    \begin{aligned}
        \calF_{\Z}f_L(\xi)
        &:=\sum_{x\in\Z}f_L(x)e(-x\cdot\xi)=L\sum_{a/Q\in Q^{-1}\Z/\Z}\calF_{\Z/Q\Z}F(a/Q)
        \sum_{k\in\Z}
        \calF_{\R}(\varphi)\bigl(L(\xi-a/Q-k)\bigr),\qquad \xi\in\T.
    \end{aligned}
\end{equation*}
Thus, the Fourier transform of $f_L$ is supported in the union
of the intervals of radius $1/L$ centered at the frequencies
$a/Q\in Q^{-1}\Z/\Z$.

For $x\in\ZZ$, define
\begin{equation}\label{eq:phiLN}
    \varphi_{L,N}(x):=\calF_{\RR}^{-1}\bigl(\mathfrak{m}((\varepsilon_N L)^{-1}\cdot)\calF_{\RR}\varphi\bigr)(x).
\end{equation}
We will show that, for sufficiently large $L$,
\begin{equation}\label{eq:1form}
    \mathcal{F}_{\Z}^{-1}
    \bigl(\Delta_{\varepsilon_N,\mathcal{R}_{\leq N}}[\mathfrak{m}]\calF_{\Z}f_L\bigr)(x)
    =\varphi_{L,N}(x/L)(P_N^{(Q)}F)(x\bmod Q),\qquad x\in\Z.
\end{equation}
More precisely, choose $L$ sufficiently large that
\begin{equation*}
    \frac1L<\min\Big\{\frac{\varepsilon_N}{2},\frac1{QN}-\frac{\varepsilon_N}{2}\Big\}.
\end{equation*}
By assumption on $\varepsilon_N$, the second number in the minimum is positive
for $N\geq3$, since $Q=N+1$ and
\begin{equation*}
    \frac{\varepsilon_N}{2}\leq\frac1{2N^2}<\frac1{N(N+1)}=\frac1{QN}.
\end{equation*}

First, suppose that $a/Q$ has denominator $Q=N+1$.
Then $a/Q\notin\mathcal{R}_{\leq N}$. Consequently, for every
$b/q\in\mathcal{R}_{\leq N}$, the points $a/Q$ and $b/q$
are distinct. Moreover,
\begin{equation*}
    \left|\frac aQ-\frac bq\right|\geq\frac1{Qq}\geq\frac1{QN}.
\end{equation*}
Every $\xi$ in the portion of the support of $\calF_{\Z}f_L$
around $a/Q$ lies within distance $1/L$ of $a/Q$.
By the choice of $L$, this radius is smaller than
\begin{equation*}
    \frac1{QN}-\frac{\varepsilon_N}{2}.
\end{equation*}
Therefore, such a point $\xi$ cannot belong to the support of $\Delta_{\varepsilon_N,\mathcal{R}_{\leq N}}[\mathfrak{m}]$.
Thus, $\Delta_{\varepsilon_N,\mathcal{R}_{\leq N}}[\mathfrak{m}](\xi)=0$ for $\xi$ in these intervals around $a/Q$ with $(a,Q)=1$.

Next, suppose that $a/Q$ has reduced denominator at most $N$.
Then $a/Q$ is a canonical fraction, that is,
$a/Q\in\mathcal{R}_{\leq N}$.
Using the assumption that $(\varepsilon_N L)^{-1}<1/2$, we obtain
\begin{equation*}
    \mathcal{F}_{\Z}^{-1}
    \bigl(\Delta_{\varepsilon_N,\mathcal{R}_{\leq N}}[\mathfrak{m}]
        \calF_{\Z}f_L\bigr)(x)
    =\varphi_{L,N}(x/L)(P_N^{(Q)}F)(x\bmod Q).
\end{equation*}
This identity follows from the change of variables
$\eta:=L(\xi-a/Q)$ in the integral defining $\calF_{\Z}^{-1}$.
This proves \eqref{eq:1form}.

Now, we are ready to prove \eqref{eq:3:trans}. This estimate also can be deduced from the classical results of de Leeuw; see~\cite[Lemma 4.1]{deLeeuw}. The arguments here are standard. Let $H\colon \Z/Q\Z\to\C$ and let $h_L$ be defined analogusly as in \eqref{eq:2:def}. By splitting the sum into residue classes modulo $Q$, we obtain
\begin{equation*}
    \frac{1}{L}\|h_L\|^p_{\ell^p(\ZZ)}=\sum_{y\in\Z/Q\Z}|H(y)|^p\frac{1}{L}\sum_{x\in\Z}|\varphi(L^{-1}(xQ+y))|^p
\end{equation*}
For every fixed $y\in \Z/Q\Z$, the inner sum is a Riemann sum, so one has
\begin{equation*}
    \lim_{L\to\infty}\frac{1}{L}\sum_{x\in\Z}|\varphi(L^{-1}(xQ+y))|^p=\frac{1}{Q}\|\varphi\|^p_{L^p(\R)}
\end{equation*}
and consequently
\begin{equation*}
    \lim_{L\to\infty}\frac{1}{L}\|h_L\|^p_{\ell^p(\ZZ)}=\|\varphi\|^p_{L^p(\R)}\|H\|_{\ell^p(\Z/Q\Z)}^p
\end{equation*}
The same argument applies to $\varphi_{L,N}$ defined in
\eqref{eq:phiLN}. Indeed, fix $N$ and set
$g_L(t):=|\varphi_{L,N}(t)|^p$. By dividing the integral into disjoint intervals and using the fundamental theorem of calculus we get
\begin{align*}
    &\Big|
        \frac{Q}{L}\sum_{x\in\Z}g_L\big(L^{-1}(Qx+y)\big)
        -\int_{\R}g_L(t)\,\mathrm{d}t\Big|\leq\frac{Q}{L}\|g_L'\|_{L^1(\R)}.
\end{align*}
Moreover, since $\varphi_{L,N}\to\mathfrak{m}(0)\varphi$, as $L\to\infty$, in $\mathcal{S}(\R)$ we get that $\|g_L'\|_{L^1(\R)}$ is uniformly bounded and consequently
\begin{equation*}
    \lim_{L\to\infty}
    \frac{1}{L}\sum_{x\in\Z}
    \Big|\varphi_{L,N}\big(L^{-1}(Qx+y)\big)\Big|^p=\frac{1}{Q}\|\varphi\|_{L^p(\R)}^p.
\end{equation*}

Applying the above first to $H=F$ and then to
$H=P_N^{(Q)}F$, by~\eqref{eq:1form}, we get
\begin{equation*}
    \lim_{L\to\infty}\frac{\big\|\mathcal{F}_{\Z}^{-1}
        \big(\Delta_{\varepsilon_N,\mathcal{R}_{\leq N}}[\mathfrak{m}]
            \calF_{\Z}f_L\big)\big\|_{\ell^p(\Z)}}{\|f_L\|_{\ell^p(\Z)}}=
\frac{\|P_N^{(Q)}F\|_{\ell^p(\Z/Q\Z)}}{\|F\|_{\ell^p(\Z/Q\Z)}},
\end{equation*}
for every nonzero $F\in \ell^p(\Z/Q\Z)$. This implies
\begin{equation*}
    \big\|\mathcal{F}_{\Z}^{-1}
        \big(
            \Delta_{\varepsilon_N,\mathcal{R}_{\leq N}}[\mathfrak{m}]
            \calF_{\Z}(\cdot)\big)\big\|_{\ell^p(\Z)\to\ell^p(\Z)}\geq \frac{\|P_N^{(Q)}F\|_{\ell^p(\Z/Q\Z)}}
     {\|F\|_{\ell^p(\Z/Q\Z)}}.
\end{equation*}
Taking the supremum over all $F$ yields~\eqref{eq:3:trans}. Therefore in order to show Theorem~\ref{thm:IW:lower_new} it suffices to show
\begin{equation*}
    \|P_N^{(Q)}\|_{\ell^p(\Z/Q\Z)\to \ell^p(\Z/Q\Z)}\gtrsim_p N^{c_p/\log\log N},
\end{equation*}
for some constant $c_p>0.$

\section{Structure of \texorpdfstring{$P_N^{(Q)}$}{PN(Q)} when \texorpdfstring{$Q$}{Q} is a product of distinct primes}\label{sec:structure}

We now analyze the operator $P_N^{(Q)}$ when $Q$
is a product of distinct primes. We first reduce $P_{Q-1}^{(Q)}$ to
the projection onto fractions with denominator $Q$, and then use the
Chinese remainder theorem to reduce the latter product of one-dimensional operators.

Let
\begin{equation*}
   Q= q_1\cdots q_K, 
\end{equation*}
where $q_1,\ldots,q_K$ are distinct primes, and set
$N:=Q-1$. Let $D_Q$ be the Fourier multiplier operator defined by
\begin{equation*}
D_QF(x):=\sum_{\substack{a/Q\in Q^{-1}\Z/\Z\\(a,Q)=1}}
\mathcal{F}_{\Z/Q\Z}F(a/Q)e(x\cdot a/Q),\quad x\in\Z/Q\Z.
\end{equation*}
We are going to prove that the convolution kernel of the above operator is given by 
\begin{equation*}
    \prod_{i=1}^K(q_i\mathds{1}_{q_i|x}-1),\quad x\in\Z/Q\Z.
\end{equation*}
The above operator acts on $F_a(x):=e(x\cdot a/Q)$ in the following way
\begin{equation*}
    D_QF_a=\begin{cases}
        F_a,&\text{ if }(a,Q)=1,\\
        0,&\text{ if }(a,Q)>1.
    \end{cases}
\end{equation*}

Since every proper divisor $d<Q$ satisfies
$d\leq Q/2<Q-1$, we have $P_{Q-1}^{(Q)}=I-D_Q.$
Consequently,
\begin{equation}\label{eq:P-from-D}
\|P_{Q-1}^{(Q)}\|_{\ell^p(\Z/Q\Z)\to \ell^p(\Z/Q\Z)}
\geq
\|D_Q\|_{\ell^p(\Z/Q\Z)\to \ell^p(\Z/Q\Z)}-1.
\end{equation}

Now we are going to show the factorization of the operator $D_Q$. Set $Q_j:=\frac{Q}{q_j},$
and choose $b_j\in\Z$ such that
\begin{equation*}
    b_jQ_j\equiv1\pmod{q_j}.
\end{equation*}
Existence of such $b_j$ follows from B\'ezout's lemma since $q_j$ and $Q_j$ are co-prime. By the Chinese remainder theorem, every $x\in\Z/Q\Z$ is uniquely determined by the tuple
\begin{equation*}
    (x_1,\ldots,x_K)\in\prod_{j=1}^K\Z/q_j\Z,\qquad x_j\equiv x\pmod{q_j},
\end{equation*}
and one has
\begin{equation*}
    x\equiv\sum_{j=1}^K x_jb_jQ_j\pmod Q.
\end{equation*}

It follows that
\begin{equation*}
e\Big(\frac{ax}{Q}\Big)=e\Big(\frac aQ\sum_{j=1}^Kx_jb_jQ_j\Big)=\prod_{j=1}^K
e\Big(\frac{ab_jx_j}{q_j}\Big).
\end{equation*}
Now, if $a_j\equiv ab_j\pmod{q_j},$ we have
\begin{equation*}
    e\Big(\frac{ax}{Q}\Big)=\prod_{j=1}^Ke\Big(\frac{a_jx_j}{q_j}\Big).
\end{equation*}
Since $b_j$ is invertible modulo $q_j$, we have $a_j\neq0\pmod{q_j}$ if and only if $q_j\nmid a$. Since $Q=q_1\cdots q_K$, this gives
\begin{equation*}
    (a,Q)=1\quad\Longleftrightarrow\quad a_j\neq0\pmod{q_j}\quad\text{for every }j.
\end{equation*}

For each $j\in\{1,\ldots,K\}$, let
\begin{equation*}
    E_{q_j}f:=\frac1{q_j}\sum_{y\in\Z/q_j\Z}f(y).
\end{equation*}
By ortogonality, we have
\begin{equation*}
    (I-E_{q_j})e\Big(\frac{a_jx_j}{q_j}\Big)=
\begin{cases}
e\Big(\dfrac{a_jx_j}{q_j}\Big),
    & a_j\neq0,\\[4pt]
0,  & a_j=0.
\end{cases}
\end{equation*}

Therefore, the tensor operator given by
$\bigotimes_{j=1}^K(I-E_{q_j})$
preserves the exponential $e(ax/Q)$ precisely when
$(a,Q)=1$, and annihilates it otherwise. This is exactly the action
of $D_Q$. Since the exponential functions form a basis of the space of functions on $\Z/Q\Z$ we can write
\begin{equation}\label{eq:D-tensor-product}
D_Q=\bigotimes_{j=1}^K(I-E_{q_j}).
\end{equation}
Finally, by factorizing $\Z/Q\Z$ into product of $\Z/q_i\Z$ and taking the product of maximizers, we see that for $p\in(1,\infty)$, one has
\begin{equation}\label{eq:lower:est}
    \|D_Q\|_{\ell^p(\Z/Q\Z)\to \ell^p(\Z/Q\Z)}
\geq
\prod_{j=1}^K
\|I-E_{q_j}\|_
{\ell^p(\Z/q_j\Z)\to \ell^p(\Z/q_j\Z)}.
\end{equation}

In the next section, we prove that for every
$p\in(1,\infty)\setminus\{2\}$, there exist constants
$\eta_p>0$ and $q_p\geq2$ such that
\begin{equation}\label{eq:one-dimensional-lower}
\|I-E_q\|_{\ell^p(\Z/q\Z)\to \ell^p(\Z/q\Z)}
\geq 1+\eta_p
\end{equation}
for every prime $q\geq q_p$.

Assuming~\eqref{eq:one-dimensional-lower}, choose distinct primes
$q_1,\ldots,q_K\geq q_p$ and write
\begin{equation*}
    Q_K=q_1\cdots q_K,
\qquad
N_K=Q_K-1.
\end{equation*}
Estimate \eqref{eq:lower:est} gives
\begin{equation*}
    \|D_{Q_K}\|_{\ell^p(\Z/Q_K\Z)\to \ell^p(\Z/Q_K\Z)}\geq(1+\eta_p)^K.
\end{equation*}
Combining the above with \eqref{eq:P-from-D}, we obtain
\begin{equation}\label{eq:P-lower-K}
\|P_{N_K}^{(Q_K)}\|_{\ell^p(\Z/Q_K\Z)\to \ell^p(\Z/Q_K\Z)}\geq(1+\eta_p)^K-1.
\end{equation}

Finally, let $(p_n)_{n\in\N}$ denote the increasing sequence of the
prime numbers. Choose $j_p\in\N$ so that $p_{j_p}\geq q_p$, and
set
\begin{equation*}
    q_j:=p_{j_p+j-1},\qquad 1\leq j\leq K.
\end{equation*}
Thus, $q_1,\ldots,q_K$ are $K$ consecutive primes, all of
which are at least $q_p$. Recall that $Q_k=q_1\dots q_K$. Taking logarithms gives $\log Q_K=\sum_{j=1}^K\log q_j=\vartheta(q_K)-\vartheta(p_{j_p-1}),$ where $\vartheta$ is Chebyshev's theta function. Since $j_p$ is fixed, the second term
is independent of $K$. By the prime number theorem,
\begin{equation*}
    \lim_{x\to\infty}\frac{\vartheta(x)}{x}=1
\end{equation*}
and hence $\log Q_K\eqsim q_K.$
Again, by the prime number theorem, we get $q_K\simeq K\log K$ because $j_p$ is fixed. Consequently,
\begin{equation}\label{eq:log-QK-asymptotic}
\log Q_K\simeq_p K\log K.
\end{equation}
Since $N_K=Q_K-1$, we get
\begin{equation}\label{eq:K-vs-NK}
K\simeq_p\frac{\log N_K}{\log\log N_K}.
\end{equation}

On the other hand,~\eqref{eq:P-lower-K} gives
\begin{equation*}
    \|P_{N_K}^{(Q_K)}\|_{\ell^p(\Z/Q_K\Z)\to \ell^p(\Z/Q_K\Z)}\geq(1+\eta_p)^K-1.
\end{equation*}
Combining this with~\eqref{eq:K-vs-NK}, we get that there exists a constant $c_p>0$ such that
\begin{equation*}
    \|P_{N_K}^{(Q_K)}\|_{\ell^p(\Z/Q_K\Z)\to \ell^p(\Z/Q_K\Z)}\geq\exp\Big(
c_p\frac{\log N_K}{\log\log N_K}\Big)
\end{equation*}
for every sufficiently large $K$. This proves Theorem~\ref{thm:IW:lower_new}.
\section{Estimate for the one-dimensional projection -- proof of \eqref{eq:one-dimensional-lower}}\label{sec:lower bound}

It remains to prove \eqref{eq:one-dimensional-lower}. For any $q\in\N$ and $f\in \ell^1(\Z/q\Z)$ let
\begin{equation*}
    E_qf:=\frac1q\sum_{x\in\Z/q\Z}f(x)
\end{equation*}
be the projection onto the constant functions on $\Z/q\Z$,
equipped with normalized counting measure.

On $\ell^2(\Z/q\Z)$, the operator $E_q$ is an orthogonal
projection, and therefore so is $I-E_q$. Hence
\begin{equation*}
    \|f\|_{\ell^2(\Z/q\Z)}^2=\|E_q f\|_{\ell^2(\Z/q\Z)}^2+\|(I-E_q)f\|_{\ell^2(\Z/q\Z)}^2,
\end{equation*}
which gives
\begin{equation*}
    \|I-E_q\|_{\ell^2(\Z/q\Z)\to\ell^2(\Z/q\Z)}\leq1.
\end{equation*}
Taking, for example, $f=\mathds{1}_{\{0\}}-\mathds{1}_{\{1\}},$ we have $E_q f=0$, and thus
\begin{equation*}
    \|I-E_q\|_{\ell^2(\Z/q\Z)\to \ell^2(\Z/q\Z)}=1.
\end{equation*}

We now show that, when $p\neq2$, the norm of $I-E_q$ is
uniformly larger than $1$ for all sufficiently large~$q$.

\begin{lemma}\label{lem:one-dimensional-lower}
Let $p\in(1,\infty)\setminus\{2\}$. There exist constants
$\eta_p>0$ and $q_p\geq2$ such that
\begin{equation*}
    \|I-E_q\|_{\ell^p(\Z/q\Z)\to \ell^p(\Z/q\Z)}\geq1+\eta_p
\end{equation*}
for every integer $q\geq q_p$. More precisely, for any fixed
$\theta\in(0,1)\setminus\{1/2\}$, one may take
\begin{equation*}
    \eta_p:=\frac{C_p(\theta)-1}{2},
\end{equation*}
where
\begin{equation*}
    C_p(\theta):=\big(\theta^{p-1}+(1-\theta)^{p-1}\big)^{1/p}\big(\theta^{1/(p-1)}+(1-\theta)^{1/(p-1)}\big)^{(p-1)/p}.
\end{equation*}
\end{lemma}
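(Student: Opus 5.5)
The plan is to test $I-E_q$ on a two-valued function adapted to a set of proportion about $\theta$, compute the resulting ratio exactly, and then show that this ratio exceeds $1$ strictly unless $\theta=1/2$ or $p=2$. Fix $\theta\in(0,1)\setminus\{1/2\}$. For each $q$ let $A\subset\Z/q\Z$ have $|A|=\lfloor\theta q\rfloor$, and put $\theta_q:=|A|/q$, so $\theta_q\to\theta$. We consider $f=x\ind{A}+y\ind{A^c}$ with real $x,y$ of opposite signs.

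\textbf{Step 1: the exact ratio.} Since $E_qf=\theta_q x+(1-\theta_q)y$, we get $(I-E_q)f=(x-y)(\ind{A}-\theta_q)$. Writing $\theta$ for $\theta_q$ in this step,
\begin{equation*}
\frac{\|(I-E_q)f\|_{\ell^p(\Z/q\Z)}^p}{\|f\|_{\ell^p(\Z/q\Z)}^p}
=\frac{|x-y|^p\,\theta(1-\theta)\bigl((1-\theta)^{p-1}+\theta^{p-1}\bigr)}{\theta|x|^p+(1-\theta)|y|^p}.
\end{equation*}
We maximize $|x-y|=|x|+|y|$ subject to $\theta|x|^p+(1-\theta)|y|^p=1$. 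Hölder's inequality with weights $\theta,1-\theta$ gives the maximum $\bigl(\theta^{-1/(p-1)}+(1-\theta)^{-1/(p-1)}\bigr)^{(p-1)/p}$, attained at an explicit pair $x,y$. Substituting and absorbing $(\theta(1-\theta))^{1/p}$ into the second factor, the ratio becomes
\begin{equation*}
\bigl(\theta^{p-1}+(1-\theta)^{p-1}\bigr)^{1/p}\bigl((1-\theta)^{1/(p-1)}+\theta^{1/(p-1)}\bigr)^{(p-1)/p}=C_p(\theta_q).
\end{equation*}
Hence $\|I-E_q\|_{\ell^p(\Z/q\Z)\to\ell^p(\Z/q\Z)}\ge C_p(\theta_q)$.

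\textbf{Step 2: $C_p(\theta)>1$.} Set $s=p-1\in(0,\infty)\setminus\{1\}$, so that $C_p(\theta)^p=(\theta^s+(1-\theta)^s)(\theta^{1/s}+(1-\theta)^{1/s})^s$. For $t\in\{\theta,1-\theta\}$ write $t=(t^s)^{1/(s+1)}(t^{1/s})^{s/(s+1)}$. Apply Hölder's inequality with exponents $s+1$ and $(s+1)/s$ to the sum $\theta+(1-\theta)$:
\begin{equation*}
1\le\bigl(\theta^s+(1-\theta)^s\bigr)^{1/(s+1)}\bigl(\theta^{1/s}+(1-\theta)^{1/s}\bigr)^{s/(s+1)}=C_p(\theta)^{p/(s+1)}=C_p(\theta).
\end{equation*}
Equality holds only if $t^s$ and $t^{1/s}$ are proportional over $t\in\{\theta,1-\theta\}$. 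Since $s\neq1$, this forces $\theta=1-\theta$, which is excluded. So $C_p(\theta)>1$.

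\textbf{Step 3: passage to the limit.} The function $C_p$ is continuous on $(0,1)$ and $\theta_q\to\theta$. Therefore there is $q_p$ such that for all $q\ge q_p$ we have $\theta_q\in(0,1)$ and $C_p(\theta_q)\ge(1+C_p(\theta))/2=1+\eta_p$. Combined with Step 1, this proves the lemma.

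The only delicate point is Step 2, that is, identifying the Hölder decomposition that makes strict inequality transparent. Steps 1 and 3 are routine computation and continuity.
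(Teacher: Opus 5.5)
Your proposal is correct and follows essentially the same route as the paper: test on two-valued functions $x\ind{A}+y\ind{A^c}$ with $|A|=\lfloor\theta q\rfloor$, show the optimal ratio is exactly $C_p(\theta_q)$, prove $C_p(\theta)>1$ via the same H\"older equality-case argument, and conclude by continuity. Your H\"older maximization of $|x|+|y|$ is precisely the paper's computation of the dual norm of $L(x,y)=x-y$. The only difference is cosmetic: the paper packages this via an isometric isomorphism $U_q$ between $L^p(X_{\theta_q})$ and the two-valued subspace $V_q$, whereas you compute directly on $\Z/q\Z$.
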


\begin{proof}
We are going to find maximizers along functions of the form
\begin{equation*}
    g(n):=x\mathds{1}_A(n)+y\mathds{1}_{A^c}(n),\quad x,y\in\C,\quad n\in\Z/q\Z,
\end{equation*}
for $A\subseteq \Z/q\Z$. Note that
\begin{equation*}
    E_q g=\frac{|A|}{q}x+\Big(1-\frac{|A|}{q}\Big)y=\delta_A x+(1-\delta_A)y
\end{equation*}
with $\delta_A:=|A|/q\in[0,1]$ being the density of the set $A$.

We first compute the norm of the corresponding projection on a
two-point probability space. Let
\begin{equation*}
    X_\theta=\{0,1\},
\qquad
\mu_\theta(\{0\})=\theta,
\qquad
\mu_\theta(\{1\})=1-\theta.
\end{equation*}
We identify functions $f \in L^p(X_\theta)$ with pairs $(f(0),f(1))$. For $f=(x,y)\in L^p(X_\theta)$, define
\begin{equation*}
    \mathbb E_\theta f:=\theta x+(1-\theta)y.
\end{equation*}
Then
\begin{equation*}
    (I-\mathbb E_\theta)f=\bigl((1-\theta)(x-y),-\theta(x-y)\bigr),
\end{equation*}
and consequently
\begin{equation*}
    \|(I-\mathbb E_\theta)f\|_{L^p(X_\theta)}=|x-y|[\theta(1-\theta)^p+(1-\theta)\theta^p]^{1/p}.
\end{equation*}
Let $L(x,y)=x-y$. With respect to the duality pairing
\begin{equation*}
    \langle (x,y),(u,v)\rangle:=\theta xu+(1-\theta)yv,
\end{equation*}
the functional $L\colon L^p(X_\theta)\to\C$ is represented by
\begin{equation*}
    \Big(\frac1\theta,-\frac1{1-\theta}\Big).
\end{equation*}
If $p'=p/(p-1)$, then the duality of $L^p$ gives,
\begin{equation*}
    \|L\|=\left[\theta^{-1/(p-1)}+(1-\theta)^{-1/(p-1)}\right]^{(p-1)/p}.
\end{equation*}
Combining the two formulas gives
\begin{equation}\label{eq:two-point-exact-norm}
\|I-\mathbb E_\theta\|_{L^p(X_\theta)\to L^p(X_\theta)}
=
C_p(\theta).
\end{equation}

Now we will show that $C_p(\theta)>1$ whenever $p\neq2$ and
$\theta\neq1/2$. H\"older's inequality gives
\begin{equation*}
\begin{aligned}
1&=\theta+(1-\theta)
\\
&\leq\big(\theta^{p-1}+(1-\theta)^{p-1}\big)^{1/p}\big(\theta^{1/(p-1)}+
(1-\theta)^{1/(p-1)}\big)^{(p-1)/p}
\\
&=C_p(\theta).
\end{aligned}
\end{equation*}
Equality would imply
\begin{equation*}
    \theta^{p-1-1/(p-1)}=(1-\theta)^{p-1-1/(p-1)}.
\end{equation*}
Since $p\neq2$, the exponent is nonzero, and therefore equality forces $\theta=1/2$. This proves that $C(\theta)>1.$

We now transfer this two-point calculation to $\Z/q\Z$. Set
\begin{equation*}
    m_q:=\lfloor\theta q\rfloor,\qquad\theta_q:=\frac{m_q}{q}.
\end{equation*}
For all sufficiently large $q$, we have
$0<m_q<q$, and $\theta_q\to\theta.$ Next, we choose a subset $A_q\subseteq\Z/q\Z$ with
$|A_q|=m_q$, and let
\begin{equation*}
    V_q:=\big\{x\mathds{1}_{A_q}+y\mathds{1}_{A_q^c}\colon x,y\in\C\big\}
\end{equation*}
be the subspace of $\ell^p(\Z/q\Z)$. For $x,y\in\C$ we define the function on $\Z/q\ZZ$ by
\begin{equation*}
    U_q(x,y)(n):=x\mathds{1}_{A_q}(n)+y\mathds{1}_{A_q^c}(n),\quad n\in\Z/q\Z.
\end{equation*}

We are going to show that $U_q$ defines an isometric isomorphism between $L^p(X_{\theta_q})$ and $V_q$. It is easy to see that the function is linear and invertible. Further, we have
\begin{equation*}
    \|U_q(x,y)\|_{\ell^p(\Z/q\Z)}^p=\theta_q|x|^p+(1-\theta_q)|y|^p=\|(x,y)\|_{L^p(X_{\theta})}^p,
\end{equation*}
so $U_q$ is an isometric isomorphism from $L^p(X_{\theta_q})$ onto $V_q$. Moreover,
\begin{equation*}
    E_q U_q(x,y)=\theta_q x+(1-\theta_q)y,
\end{equation*}
viewed as a constant function on $\Z/q\Z$. Hence
\begin{equation*}
    (I-E_q)U_q=U_q(I-\mathbb E_{\theta_q}),
\end{equation*}
and the restriction of $I-E_q$ to $V_q$ is isometrically equivalent to $I-\mathbb E_{\theta_q}$. By~\eqref{eq:two-point-exact-norm},
\begin{equation*}
    \|I-E_q\|_{\ell^p(\Z/q\Z)\to \ell^p(\Z/q\Z)}\geq C_p(\theta_q).
\end{equation*}
Since $C_p$ is continuous on $(0,1)$ and $\theta_q\to\theta$, we have
\begin{equation*}
    C_p(\theta_q)\to C_p(\theta)>1.
\end{equation*}
Set $\eta_p:=\frac{C_p(\theta)-1}{2}.$ Then, for all sufficiently large $q$, one has $ C_p(\theta_q)\geq1+\eta_p,$ which proves the claim.
\end{proof}
\subsection{Final Remarks}
The constant $C_p(\theta)$ appearing in Lemma~\ref{lem:one-dimensional-lower}
has a remarkably elegant form. It is known as the Franchetti constant
and is well-known in approximation and optimization theory in Banach
spaces; see the article by Shargorodsky and Sharia~\cite{SS}
for an introduction to the problem and its history.

In fact, a substantially stronger result than
Lemma~\ref{lem:one-dimensional-lower} is available.
Namely, Lewicki and Skrzypek~\cite[Lemma~2.9]{LS} proved that
\begin{equation*}
    \lim_{q\to\infty}
    \|I-E_q\|_{\ell^p(\Z/q\Z)\to\ell^p(\Z/q\Z)}
    =
    \sup_{\theta\in(0,1)}C_p(\theta).
\end{equation*}
Although our result is a way weaker, it suffices
for our purposes.

\end{document}